\newcommand{\eqd}{\stackrel{\mathcal{L}}{=}}
\newcommand{\diag}{\operatorname{diag}}
\newcommand{\T}{\mathcal{T}}
\newcommand{\E}{{\mathbb E}}
\newcommand{\tW}{\widetilde{W}}
\def\R{I\!\!R}
\newcommand{\Var}{\operatorname{Var}}
\newtheorem{thm}{Theorem}[section]
\newtheorem{rem}[thm]{Remark}
\theoremstyle{definition}
\newtheorem{defi}[thm]{Definition}
\renewcommand{\P}{{\mathbb P}}
\renewcommand{\L}{\mathscr{L}}
\newcommand{\B}{\mathcal{B}}
\newcommand{\vX}{\vec{X}}
\newcommand{\U}{\widehat{U}}
\newcommand{\s}{s}
\begin{document}
\begin{frontmatter}
\title{Series representations for bivariate \\time-changed  L{\'e}vy models\thanksref{T1}}
\runtitle{Series representations for time-changed L{\'e}vy models}

\begin{aug}
\author{\fnms{Vladimir} \snm{Panov}\ead[label=e1]{vpanov@hse.ru}}
\and
\author{\fnms{Igor} \snm{Sirotkin}\ead[label=e2]{sirotkinig@gmail.com}}

\address{Laboratory of Stochastic Analysis and its Applications \\National Research University Higher School of Economics\\ 
Shabolovka  31, building G,  115162 Moscow, Russia\\
\printead{e1,e2}}

\thankstext{T1}{This study (research grant No 14-05-0007) was supported by the National Research University-Higher School of Economics' Academic Fund Program in 2014.}

\runauthor{V.Panov  and I.Sirotkin}
\affiliation{Higher School of Economics}
\end{aug}

\begin{abstract}
In this paper, we analyze a L{\'e}vy model based on two popular concepts  - subordination and L{\'e}vy copulas. More precisely, we consider a two-dimensional L{\'e}vy process such that each component is a time-changed (subordinated) Brownian motion and the dependence between subordinators  is described via some L{\'e}vy copula.  We prove a series representation for our model, which can be efficiently used for simulation purposes, and provide some practical examples based on real data. 
\end{abstract}

\begin{keyword}[class=MSC]
\kwd[Primary ]{60G51}
\kwd[; secondary ]{62F99}
\end{keyword}

\begin{keyword}
\kwd{L\'evy copula}
\kwd{time-changed L{\'e}vy process}
\kwd{subordination}
\end{keyword}

\tableofcontents
\end{frontmatter}

\section{Introduction} 
Copula is with no doubt the most popular tool for describing the dependence  between two random variables.    The popularity is partially based on the fact that the dependence between any random variables can be modelled by some copula. This fact is known as  Sklar's theorem, which states that for any two random variables \(Y_{1}\) and \(Y_{2}\) there exists a \textit{copula} \(C\) (a two-dimensional real-valued distribution function with domain \([0,1]^{2}\) and uniform margins) such that 
   \begin{eqnarray}
\label{Copula}
   \P \left\{Y_{1} \leq u_{1}, Y_{2} \leq u_{2}\right\} 
   =
   C \Bigl( 
   \P\left\{
   	 Y_{1} \leq u_{1}
   \right\},
   \P\left\{
   	 Y_{2}\leq u_{2}
   \right\}
   \Bigr),    
\end{eqnarray} 
for any \(u_{1}, u_{2} \geq 0.\) We refer to  Cherubini et al. (\citeyear{cher}), Joe (\citeyear{Joe}), Nelsen(\citeyear{Nelsen}) for a comprehensive overview of the copula theory. 
    
 Now let us switch from random variables to stochastic processes and try to describe dependence between components of some two-dimensional \textit{L{\'e}vy process} \(\vX(t)= \left(X_{1}(t),X_{2}(t)\right)\), that is, of some  cadlag process with independent and stationary increments. Applying Sklar's theorem for any fixed time moment \(t\), we get that the dependence between \(X_{1}\) and \(X_{2}\) can be described by some copula \(C_{t}\), i.e., 
\begin{eqnarray}
\label{Ct}
   \P \left\{X_{1}(t) \leq u_{1}, X_{2}(t) \leq u_{2}\right\} 
   =
   C_{t} \Bigl( 
   \P\left\{
   	 X_{1}(t) \leq u_{1}
   \right\},
   \P\left\{
   	 X_{2}(t)\leq u_{2}
   \right\}
   \Bigr),    
\end{eqnarray} 
for any \(u_{1}, u_{2} \geq 0.\) Nevertheless, the direct application of the representation \eqref{Ct} to stochastic modeling has a couple of drawbacks.  First, it turns out that the copula \(C_{t}\) in most cases essentially depends on \(t\), see Tankov (\citeyear{Tankov})  for examples. Second, 
since the distribution of \(\vX(t)\) is infinitely divisible,  \eqref{Ct} is possible only for some subclasses of copulas.  In other words,  the class \(C_{t}\) depends on the class of marginal laws \(X_{i}(t)\). 
 
To avoid such difficulties,  researches are trying to characterize the dependence between the components of L{\'e}vy process in \textit{the time-independent fashion}.  One of the most popular approaches  for this characterization is the so-called L{\'e}vy copula (defined below), which was introduced by Tankov (\citeyear{Tankov03}), and later studied by Barndorff-Nielsen and Lindner (\citeyear{BNLindner}),  Cont and Tankov (\citeyear{ContTankov}), Kallsen and Tankov (\citeyear{KallsenTankov}), and others.  Among many papers in this field, we would like to emphasize some articles about statistical inference for L{\'e}vy copulas (mainly with applications to insurance), which include some basic ideas that are widely used in statistical research on this topic, in particular, in the  statistical analysis in  the current paper -  Esmaeili and Kl{\"u}ppelberg (\citeyear{Esm} and \citeyear{Esm2}), Avanzi, Cassar and Wong (\citeyear{Avanzi}), B{\"u}cher and Vetter (\citeyear{BV}).

The main objective of this article is the application of the L{\'e}vy copula approach to a class of stochastic processes, known as \textit{time-changed L{\'e}vy processes}. In the one-dimensional case, the time-changed L{\'e}vy process is defined as $Y_s = L_{\T(s)},$ where  \(L\) is a L{\'e}vy process and \(\T\)  is a non-negative, non-decreasing stochastic process with \( \T(0)=0\) referred as \textit{stochastic time change} or simply \textit{stochastic clock}.   If the process \(\T\) is also a L{\'e}vy process, then it is called the \textit{subordinator}, and the process \(Y_{s}\) is usually refered as the subordinated process.  The economical interpretation of the time change is based on the idea that  the ``business'' time \(\T(s)\) may run faster than the physical time in some periods, for instance, when the amount of transactions is high, see Clark (\citeyear{Clark}), An{\'e} and Geman (\citeyear{AneGeman}), Veraart and Winkel (\citeyear{VW}).  

In this paper, we consider one natural generalization of the aforementioned model to the two-dimensional case. Our construction  is  based on the so-called \textit{multivariate subordination},  introduced by  Barndorff-Nielsen,  Pederson and Sato (\citeyear{BNPedSato}). In particular, we prove the series representation of the processes from our class, which allows to simulate the processes with given characteristics, and show an application of this representation to real data.

The paper is organized as follows. In the next two sections, we shortly explain the notion of L{\'e}vy copula and the idea of stochastic change of time. Afterwards, in Section~\ref{2dtc}, we introduce our model and discuss some properties of it. Our main results are given in Section~\ref{mainres}, where we also provide some examples. In the last section, we apply our model to some stock prices.

\section{L\'evy copulas}
The construction of the L{\'e}vy copula is based on the concept of tail integrals. 
\begin{defi}
\label{tail}
For a one-dimensional L{\'e}vy process \(Z\) with L{\'e}vy measure \(\nu_{Z}\), tail integral is defined as 
\begin{eqnarray*}
U_{Z} (x) :=
\begin{cases}
		 \nu_{Z}\left(x, +\infty\right), &\text{if $x>0$,}\\
		-\nu_{Z}\left(-\infty, x \right), &\text{if $x<0$.}
	\end{cases}
\end{eqnarray*}
\end{defi}
\!\!Definition~\ref{tail} can be equivalently written as
\begin{eqnarray*}
U_{Z} (x) :=	
 (-1)^{\s(x)} \nu_{Z}\Bigl( I(x)\Bigr),
\end{eqnarray*}
where
\begin{eqnarray*}
	I(x) :=\begin{cases}
		 \left(x, +\infty\right), &\text{if $x>0$,}\\
		\left(-\infty, x \right), &\text{if $x<0$,}
	\end{cases}
	\qquad \mbox{and} \qquad 
	s(x):=\begin{cases}
		 2, &\text{if $x>0$,}\\
		1, &\text{if $x<0$.}
	\end{cases}
\end{eqnarray*}
The reason for this definition is that  in the case of infinite measure \(\nu_{Z}\), \(U_{Z} (A) \) is infinite for any set \(A\) which contains \(0\).  Analogously, for a two-dimensional process \(\vec{Z} = \left( Z_{1}, Z_{2} \right)\) with L{\'e}vy measure \(\nu_{\vec{Z}}\), 
\begin{eqnarray*}
U_{\vec{Z}} (x_{1}, x_{2}) := (-1)^{s(x_{1}) + s(x_{2})}\nu_{\vec{Z}} \Bigl( I(x_{1}) \times  I(x_{2})\Bigr),
\end{eqnarray*}
and this definition is also correct for any real \(x_{1}\) and \(x_{2}\).
\begin{defi} \label{defii}
A two-dimensional L{\'e}vy copula is  a function from \(\bar{\R}^{2}\) to \(\bar{\R}\)  such that 
\begin{enumerate}
\item \(F\) is grounded, that is, \(F\left(u_{1},u_{2}\right) =0\) if \(u_{i}=0\) for at least one \(i=1,2\).
\item \(F\) is 2-increasing.
\item \(F\) has uniform margins, that is, \(F(u, \infty) = F(\infty,u) =u\).
\item \(F\left(u_{1}, u_{2}\right) \ne \infty \) for  \(\left( u_{1}, u_{2} \right) \ne \left(\infty, \infty\right)\).
\end{enumerate}
\end{defi}

\!\!The main result on L{\'e}vy copulas is an analogue of the Sklar theorem for ordinary copulas which states that for any two-dimensional L{\'e}vy process \(\vec{Z}\) with tail integral \(U_{\vec Z}\) and marginal tail integrals \(U_{Z_{1}}\) and \(U_{Z_{2}}\), there exists a L{\'e}vy copula \(F\) such that  
\begin{eqnarray}
\label{Sklar}
	U_{\vec{Z}} (x_{1}, x_{2}) = F \left( U_{Z_{1}} (x_{1}) , U_{Z_{2}} (x_{2}) \right),
\end{eqnarray}
and vise versa, for any L{\'e}vy copula \(F\) and any one-dimensional L{\'e}vy process with tail integrals \(U_{Z_{1}}\) and \(U_{Z_{2}}\) there exists a two-dimensional L{\'e}vy process with tail integral \(U_{\vec{Z}}\) given by \eqref{Sklar} with marginal tail integrals \(U_{Z_{1}}\) and \(U_{Z_{2}}\) . The first part of this theorem can be easily verified for the case when the one-dimensional L{\'e}vy measures are infinite and have no atoms, because in this case the L{\'e}vy copula is equal to 
\begin{eqnarray}
\label{Levycopula}
	F(u_{1},u_{2}) = U \Bigl( 
		U_{Z_{1}}^{-1} (u_{1}), U_{Z_{2}}^{-1} (u_{2})
	\Bigr),
\end{eqnarray}
where \(U\) is the tail integral of the L{\'e}vy measure of  \(\vX(t)\), see \cite{KallsenTankov}. 

%Despite the compactness of the formula \eqref{Levycopula}, the practical implementation in the general case is quite intricate. The main problem 
%
%
%The point is that the formula \eqref{Levycopula} holds only in the case when the L{\'e}vy measures are infinite. In this case, L{\'e}vy measure is infinite in the vicinity of zero, and the tail integrals \(U\), \(U_{1}\) and \(U_{2}\)  should be estimated  in each quadrant separately. Therefore, estimation of the L{\'e}vy copula requires solving four separate tasks. 
%
%The last observation motivates the new method for describing the dependence between two L{\'e}vy processes, which is the objective of this research. This method is based on the so-called time-changed L{\'e}vy models, which we introduce in the next section.

\section{Time-changed L{\'e}vy models}
As it was already mentioned in the introduction, the time-changed L{\'e}vy process in the one-dimensional case is defined as 
\begin{eqnarray}
\label{tc}
Y_{s}=L_{\T(s)},
\end{eqnarray}
where \(L\) is a L{\'e}vy process, and \(\T(s)\) - a non-negative, non-decreasing stochastic process with \(\T(0)=0\). This class of models has strong mathematical background based on the so-called Monroe theorem \cite{Monroe}, which stands that any semimartingale can be represented as a time-changed Brownian motion (that is, in the form \eqref{tc} with \(L\) equal to the Brownian motion \(W\)) and vise versa, any time-changed Brownian motion is a semimartingale. Various aspects of this theory are discussed in \cite{BNS} and \cite{Cherny}. Nevertheless, the first part of the Monroe theorem doesn't hold if one introduces any of the following additional assumptions: 
\begin{enumerate}
\item  Processes \(W\) and \(\T\) are independent. This assumption is widely used in the statistical literature and 
is quite convenient for both theoretical and practical purposes.
\item Time change process \(\T\) is itself a L{\'e}vy process; such processes are known as \textit{subordinators}. In this case any resulting process \(Y_{s}\) is also a L{\'e}vy process, which is usually called \textit{a subordinated process}. 
\end{enumerate}
These drawbacks of the time-changed Brownian motion lead to the idea of considering more general model \eqref{tc} with any L{\'e}vy process instead of the Brownian motion and introducing the assumption that the processes \(\T\) and \(L\)  are independent. This  model has been attracting attention of many researches, see, e.g., \cite{panov2013d}, \cite{Bertoin} \cite{Carr}, \cite{Cherubini},  \cite{Schoutens}. 

Nevertheless, there is no clear understanding in the literature how to extend this model to the two-dimensional case. The most 
popular construction is to consider the model \eqref{tc} with a two-dimensional L{\'e}vy process \(\vec{L}\) and to provide a time change  in each component with the same  process \(\T\), see \cite{Sato}.  

Interestingly enough, in the case when \(\vec{L}\)  is a Brownian motion,  the correlation coefficient between subordinated processes is upper bounded by the correlation coefficient between the components of the Brownian motion, see  \cite{Eberlein}. Moreover, these coefficients coincides in some cases, see \cite{ContTankov}.

\section{Two-dimensional subordinated processes}
\label{2dtc}
In this section, we introduce a two-dimensional generalization of the model \eqref{tc}. This generalization is based on the notion of the two-dimenstional subordinator, which we define below.
\begin{defi}
 \textit{A two-dimensional subordinator} \(\vec{T}(s)=\left(T_{1}(s), T_{2}(s) \right)\)  is a L{\'e}vy process in \(\R^{2}\) such that both components \(T_{1}\) and \(T_{2}\) are one-dimensional subordinators. 
\end{defi}
Evidently, \(\vec{T}\) is a two-dimensional subordinator if its margins \(T_{1}\) and \(T_{2}\)  are independent. Another example is given by the following statement (see \cite{Semeraro}): if \(T_{1}, T_{2}, T_{3}\) are 3 independent subordinators, then the processes  \[\left( T_{1}(s) +  T_{3}(s), \;\; T_{2}(s) + T_{3}(s)\right)\] and \[\left(T_{1}(T_{3}(s)), \;\;T_{2}(T_{3}(s))\right)\]are two-dimensional subordinators. To the best of our knowledge,  general criteria providing necessary and sufficient conditions for two-dimensional process with marginal positive L{\'e}vy processes  to be a two-dimensional subordinator,  are not known in the literature.

Consider now a two-dimensional L{\'e}vy process \(\vec{L}(t)=\left(L_{1}(t), L_{2}(t) \right)\) with independent components and  a two-dimensional subordinator \(\vec{\T}(s)=\left(\T_{1}(s), \T_{2}(s) \right)\) such  that \(\T_{i}(s)\) is independent of \(L_{i}(s)\) for any \(i=1,2\).  Define the subordinated process by composition 
\begin{eqnarray}
\label{main}
  \vX (s) = \Bigl( X_{1}(s), X_{2} (s) \Bigr) := \Bigl( L_{1}(\T_{1}(s)), L_{2}(\T_{2}(s)) \Bigr).
\end{eqnarray}
 This construction, known as multivariate subordination,  was firstly considered in \cite{BNPedSato}.
  The next theorem sheds some light to the characteristics of such processes. 
 \begin{thm}
 \label{thm1}
 Let \(W_{i}, \: i=1,2\) be two independent one-dimensional Brownian motions and let  \(\vec{\T}(s) = \left(\T_{1}(s), \T_{2}(s) \right)\) be a two-dimensional  subordinator with L{\'e}vy triplet \(\left(\vec{\rho}, 0, \eta\right)\), where \(\vec{\rho} = \left(\rho_{1}, \rho_{2}\right)\) with \(\rho_{i} \geq 0,\;  i=1,2\) and \(\eta\) is a L{\'e}vy measure in \(\R_{+}^{2}\). 
 
 Denote by \(\diag\left(x, y\right)\) with \(x,y \in \R\) a two-dimensional diagonal matrix with the values \(x\) and \(y\) on the diagonal. 
 
 Then
 \begin{itemize}
\item  the process 
\begin{eqnarray}
\label{vX} 
\vX(s):=\Bigl( W_{1}(\T_{1}(s)), W_{2}(\T_{2}(s)) \Bigr)
\end{eqnarray}  is a two-dimensional L{\'e}vy process;
\item    the L{\'e}vy triplet of the process \(\vX\) is given by
\[
\Bigl(
\vec{0}, \diag\left(\rho_{1}, \rho_{2}\right),
\nu
\Bigr),
\]
where the L{\'e}vy measure \(\nu\) is defined as 
\[
	\nu(B):= \int_{\R_{+}^{2}}\mu \Bigl(B; \; \diag(y_{1}, y_{2}) \Bigr) \; \eta(dy_{1}, dy_{2}), 
	\quad B \subset \R^{2},
\]	 
and \(\mu\left( B ; A  \right):=\P \left\{ \xi_{A} \in B\right\}\)  is the probability that a random variable \(\xi_{A}\) with zero mean and covariance matrix \(A\) belongs to the set \(B\).
\end{itemize}
 \end{thm}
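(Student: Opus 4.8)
The plan is to establish both assertions from a single computation of the characteristic function of $\vX(s)$, matched against the Lévy--Khintchine formula in $\R^{2}$. First I would record the Lévy property: since $(W_{1},W_{2})$ is an $\R^{2}$-valued Lévy process whose two components may be time-changed independently, and $\vec{\T}$ is a subordinator independent of $(W_{1},W_{2})$, the process $\vX(s)=(W_{1}(\T_{1}(s)),W_{2}(\T_{2}(s)))$ is exactly the multivariate subordination of $(W_{1},W_{2})$ by $\vec{\T}$, which is again a Lévy process by the construction of \cite{BNPedSato}; here $\vX(0)=0$ and stochastic continuity are immediate from $\T_{i}(0)=0$ and the continuity of the Brownian paths. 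Equivalently one can verify stationarity and independence of increments directly, conditioning the increments of each $W_{i}$ on those of $\T_{i}$ and using that $\vec{\T}$ has stationary independent increments.

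For the triplet, fix $\vec u=(u_{1},u_{2})$ and condition on $\vec{\T}(s)$. Using that $W_{1},W_{2}$ are mutually independent and independent of $\vec{\T}$, together with $\E[e^{\ii u_{i}W_{i}(t)}]=e^{-\frac12 u_{i}^{2}t}$, I obtain
\[
\E\bigl[e^{\ii\langle \vec u,\vX(s)\rangle}\bigr]
= \E\Bigl[e^{-\frac12 u_{1}^{2}\T_{1}(s)}\,e^{-\frac12 u_{2}^{2}\T_{2}(s)}\Bigr]
= \E\Bigl[e^{-\frac12(u_{1}^{2}\T_{1}(s)+u_{2}^{2}\T_{2}(s))}\Bigr],
\]
namely the Laplace transform of $\vec{\T}(s)$ evaluated at $\lambda=(\tfrac12 u_{1}^{2},\tfrac12 u_{2}^{2})$. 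Inserting the Lévy--Khintchine representation of the subordinator $\vec{\T}$ with triplet $(\vec\rho,0,\eta)$ gives $\E[e^{\ii\langle \vec u,\vX(s)\rangle}]=\exp(s\,\Phi(\vec u))$ with
\[
\Phi(\vec u) = -\tfrac12\bigl(\rho_{1}u_{1}^{2}+\rho_{2}u_{2}^{2}\bigr)
+ \int_{\R_+^2}\Bigl(e^{-\frac12(u_{1}^{2}y_{1}+u_{2}^{2}y_{2})}-1\Bigr)\,\eta(dy_{1},dy_{2}).
\]
The quadratic term is precisely $-\tfrac12\langle \vec u,\diag(\rho_{1},\rho_{2})\vec u\rangle$, so the Gaussian covariance of $\vX$ is $\diag(\rho_{1},\rho_{2})$.

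The crux is to recast the $\eta$-integral as a genuine Lévy--Khintchine jump integral. The key observation is that $e^{-\frac12(u_{1}^{2}y_{1}+u_{2}^{2}y_{2})}$ is exactly the characteristic function of the centered Gaussian law $\mu(\cdot;\diag(y_{1},y_{2}))$, whence
\[
e^{-\frac12(u_{1}^{2}y_{1}+u_{2}^{2}y_{2})}-1
= \int_{\R^2}\Bigl(e^{\ii\langle \vec u,x\rangle}-1-\ii\langle \vec u,x\rangle\,\mathbf 1_{\{|x|\le 1\}}\Bigr)\,\mu\bigl(dx;\diag(y_{1},y_{2})\bigr),
\]
where the compensating linear term is inserted for free because $\mu(\cdot;\diag(y_{1},y_{2}))$ is symmetric about the origin, so $\int x\,\mathbf 1_{\{|x|\le 1\}}\,\mu(dx;\diag(y_{1},y_{2}))=0$. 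Integrating against $\eta$ and exchanging the order of integration by Fubini then puts $\Phi(\vec u)$ in canonical form with zero drift, Gaussian covariance $\diag(\rho_{1},\rho_{2})$, and jump measure $\nu(B)=\int_{\R_+^2}\mu(B;\diag(y_{1},y_{2}))\,\eta(dy_{1},dy_{2})$, which identifies the triplet.

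The main obstacle I anticipate is the bookkeeping that makes the last step rigorous: justifying the Fubini exchange while simultaneously checking that $\nu$ is a bona fide Lévy measure. I would control both by the single estimate $\int_{\R^2}(1\wedge|x|^{2})\,\mu(dx;\diag(y_{1},y_{2}))\le \min(1,y_{1}+y_{2})$, valid since the trace of the covariance is $y_{1}+y_{2}$ and $\mu$ is a probability measure; combined with the subordinator integrability $\int_{\R_+^2}(1\wedge|y|)\,\eta(dy)<\infty$ and $y_{1}+y_{2}\asymp|y|$ on $\R_+^2$, this yields $\int_{\R^2}(1\wedge|x|^{2})\,\nu(dx)<\infty$. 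The same domination, via $|e^{\ii\langle \vec u,x\rangle}-1-\ii\langle \vec u,x\rangle\mathbf 1_{\{|x|\le 1\}}|\lesssim 1\wedge|x|^{2}$, legitimizes the interchange of integration, completing the proof.
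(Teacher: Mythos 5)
Your proof is correct. The paper itself gives no argument for this theorem --- its ``proof'' is the single sentence that the result is essentially contained in \cite{BNPedSato} --- and what you have written is precisely the standard conditioning/L\'evy--Khintchine computation from that reference: condition on $\vec{\T}(s)$ to reduce the characteristic function of $\vX(s)$ to the Laplace transform of the subordinator at $(\tfrac12 u_1^2,\tfrac12 u_2^2)$, recognize $e^{-\frac12(u_1^2y_1+u_2^2y_2)}-1$ as a Gaussian-mixture jump integral (with the compensator vanishing by symmetry of $\mu(\cdot\,;\diag(y_1,y_2))$), and justify Fubini via $\int(1\wedge|x|^2)\,\mu(dx;\diag(y_1,y_2))\le\min(1,y_1+y_2)$ together with $\int(1\wedge|y|)\,\eta(dy)<\infty$. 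So you have not taken a different route; you have supplied the details the paper delegates to the citation, and they check out, including the identification of the Gaussian part $\diag(\rho_1,\rho_2)$ and the verification that $\nu$ is a genuine L\'evy measure.
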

 \begin{proof}
 This theorem is essentially proven in \cite{BNPedSato}.
 \end{proof}
 \section{Series representation for subordinated processes} 
 \label{mainres}
 In this section, we apply the result by Rosinsky \cite{Rosinsky} to our setup. 
 \begin{thm}
 \label{thm2}
 	Let \(\vec{X}(s)\) be a two-dimensional L{\'e}vy process constructed by multivariate subordination of the Brownian motion, see Theorem~\ref{thm1} for notation. Denote by  \(F(u,v)\) a positive L{\'e}vy copula between \(\T_{1}(s)\) and \(\T_{2}(s)\). Assume that \(F(u,v)\) is continuous and the mixed derivative \(\partial^{2} F (u,v)/ \partial u \partial v\) exists in \(\R_{+}^{2}\). Moreover, assume that there exists a density function \(p^{*}(\cdot)\) and a function \(f^{*}(u,x): \R_{+}^{2} \to \R_{+}\), such that 
\begin{enumerate}
\item for any \(u,x>0\),
\begin{eqnarray}
\label{fp}
	\int_{-\infty}^{f^{*}(u, x)} p^{*}(z) dz = \frac{\partial F(u,x) }{\partial u};
\end{eqnarray}
\item the function \(f^{*}(u, x)\) monotonically increases in \(x\) for any fixed \(u\), and moreover the equation
\begin{eqnarray*}
	f^{*}(u,x) = y
\end{eqnarray*}
 has a solution in closed form with respect to \(x\) for any \(y>0\);  we denote this solution by \(h^{*}(u, y)\).
 \end{enumerate}
%Examples of such functions \(p^{*}\) and \(f^{*}\) will be given below.
Next, define a two-dimensional stochastic process \(\vec{Z}(s)= \left( Z_{1} (s), Z_{2}(s)\right)\):
	\begin{eqnarray}
		\label{res1}
		Z_{1} (s) &:=&  \sum_{i=1}^{\infty} \sqrt{U_{1}^{(-1)}(\Gamma_{i})} \cdot  G^{(1)}_{i} \cdot I\left\{ R_{i} \leq s \right\},\\ 
		\label{res2}
		Z_{2} (s)  &:=&  \sum_{i=1}^{\infty}   G^{(2)}_{i}  \sqrt{ U_{2}^{(-1)}\left(
			h^{*}(\Gamma_{i},G^{(3)}_{i} )
		\right)}  \cdot  I\left\{ R_{i} \leq s \right\}
	\end{eqnarray}
 where \(U_{1}\) and \(U_{2}\)  are tail integrals of the subordinators \(T_{1}\) and \(T_{2}\) resp., 
 \(U_{1}^{(-1)}\) and \(U_{2}^{(-1)}\) are their generalized inverse functions, that is,
 \begin{eqnarray*}
	U_{i}^{(-1)} (y) =  \inf \left\{ 
		x>0: \quad U_{i}(x) <y 
	\right\}, \quad i=1,2, \quad y \in \R_{+},
\end{eqnarray*}
 \(\Gamma_{i}\) is an independent sequence of jump times of a standard Poisson process, \(G^{(1)}_{i}, \; G^{(2)}_{i} \) - are two sequences of i.i.d. standard normal r.v., \(G^{(3)}_{i}\) - sequence of i.i.d. random variables with density \(p^{*}(\cdot)\),  \(R_{i}\) -sequence of i.i.d. r.v., uniformly distributed on \([0,1]\), and all sequences of r.v. are independent of each other.   
 Then 
	\[\vec{X}(s) \eqd \vec{Z}(s), \qquad \forall s \in [0,1].\]
  \end{thm}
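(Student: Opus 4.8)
The plan is to recognize the process \(\vec{Z}\) as the Rosinski series representation \cite{Rosinsky} of a L\'evy process whose L\'evy measure is exactly the measure \(\nu\) that Theorem~\ref{thm1} attaches to \(\vec{X}\). Recall that Rosinski's theorem guarantees that if one can produce a measurable kernel \(H\colon \R_{+}\times \V \to \R^{2}\) and an auxiliary random element \(V\) taking values in \(\V\) with
\begin{equation}
\label{plan-rosinski}
	\nu(B) \;=\; \int_{0}^{\infty} \P\bigl\{ H(r,V)\in B \bigr\}\, dr, \qquad B\subset \R^{2},
\end{equation}
then, subject to a convergence condition on the summands, the L\'evy process with L\'evy measure \(\nu\) equals \(\sum_{i} H(\Gamma_{i},V_{i})\, I\{R_{i}\le s\}\) in law on \([0,1]\), where \(\Gamma_{i}\), \(V_{i}\) and \(R_{i}\) are as in the statement. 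The whole argument then reduces to choosing \(H\) and \(V\) so that this series coincides with \eqref{res1}--\eqref{res2} and to verifying \eqref{plan-rosinski}.

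First I would pass to \emph{level coordinates} for the subordinator. Writing \(u=U_{1}(y_{1})\) and \(v=U_{2}(y_{2})\) and invoking the L\'evy--Sklar relation \eqref{Sklar}, the tail integral of \(\vec{\T}\) in these coordinates is \(F(u,v)\), so the image of \(\eta\) under \((y_{1},y_{2})\mapsto(u,v)\) has density \(\partial^{2}F(u,v)/\partial u\,\partial v\) on \(\R_{+}^{2}\); in particular the first-level intensity is Lebesgue, because the uniform margin \(F(u,\infty)=u\) forces the first marginal tail integral to equal \(u\), which is exactly why the unit-rate Poisson times \(\Gamma_{i}\) parametrize the first levels. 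The same margin identity gives \(\int_{0}^{\infty}\partial^{2}F(u,v)/\partial u\,\partial v\; dv=1\), so for each fixed \(u\) the mixed derivative is a conditional density in \(v\) whose distribution function is precisely \(\partial F(u,x)/\partial u\).

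Next I would take \(V=(G^{(1)},G^{(2)},G^{(3)})\) with \(G^{(1)},G^{(2)}\) standard normal, \(G^{(3)}\) of density \(p^{*}\), and set
\[
	H\bigl(r,(g_{1},g_{2},g_{3})\bigr)=\Bigl( \sqrt{U_{1}^{(-1)}(r)}\,g_{1},\; g_{2}\sqrt{U_{2}^{(-1)}\bigl(h^{*}(r,g_{3})\bigr)}\,\Bigr),
\]
which is exactly the summand of \eqref{res1}--\eqref{res2}. The key point is that \(h^{*}(r,G^{(3)})\) realizes the conditional law of the second level given \(u=r\): since \(f^{*}(r,\cdot)\) is increasing with inverse \(h^{*}(r,\cdot)\), condition \eqref{fp} yields
\[
	\P\bigl\{ h^{*}(r,G^{(3)})\le x \bigr\}=\P\bigl\{ G^{(3)}\le f^{*}(r,x) \bigr\}=\int_{-\infty}^{f^{*}(r,x)}p^{*}(z)\,dz=\frac{\partial F(r,x)}{\partial r},
\]
the conditional distribution function found above. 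Hence, at a given level \(r\), the jump of \(T_{1}\) has size \(U_{1}^{(-1)}(r)\) and, conditionally, that of \(T_{2}\) has size \(U_{2}^{(-1)}(h^{*}(r,G^{(3)}))\) with the correct law; by Theorem~\ref{thm1} the resulting jump of \(\vec{X}\) is the centred Gaussian vector with covariance \(\diag(y_{1},y_{2})\), of law \(\mu(\cdot;\diag(y_{1},y_{2}))\). Averaging over \(G^{(3)}\) and integrating in \(r\) gives \(\int_{0}^{\infty}\!\int_{0}^{\infty}\mu\bigl(B;\diag(U_{1}^{(-1)}(r),U_{2}^{(-1)}(v))\bigr)\,(\partial^{2}F/\partial r\,\partial v)\,dv\,dr\), and changing variables back to \((y_{1},y_{2})\) turns the copula density into \(\eta\) and recovers \(\nu(B)\), establishing \eqref{plan-rosinski}.

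I expect the main obstacle to lie in this change of variables: one must check that the Jacobian of the level transformation is precisely absorbed by the copula density \(\partial^{2}F/\partial u\,\partial v\), so that no spurious factor survives in \eqref{plan-rosinski}, and that the monotonicity and inversion defining \(h^{*}\) are valid \(\eta\)-almost everywhere. The remaining work is to discharge the convergence hypothesis of Rosinski's theorem; here the symmetry of the Gaussian jumps (inherited from the symmetric \(G^{(1)}_{i},G^{(2)}_{i}\)) lets the series converge without centering constants, which is why \eqref{res1}--\eqref{res2} carry no compensating terms. Finally one should record that the Gaussian part \(\diag(\rho_{1},\rho_{2})\) of \(\vec{X}\), produced by the drift \(\vec{\rho}\) of \(\vec{\T}\), is not generated by the series and must either be assumed to vanish or be added as an independent term \((\sqrt{\rho_{1}}\,B_{1}(s),\sqrt{\rho_{2}}\,B_{2}(s))\) with \(B_{1},B_{2}\) independent Brownian motions.
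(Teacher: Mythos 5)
Your proposal follows essentially the same route as the paper's own proof: both reduce the theorem to verifying Rosinski's condition \(\nu(B)=\int_{0}^{\infty}\P\{\vec{H}(r,\vec{D})\in B\}\,dr\) with the identical kernel \(\vec{H}\) and auxiliary vector \(\vec{D}=(G^{(1)},G^{(2)},G^{(3)})\), passing to level coordinates via the L\'evy--Sklar relation \eqref{Sklar}, exhibiting the second component as a variance mixture of normals, and using \eqref{fp} to identify \(h^{*}(r,G^{(3)})\) as having the conditional distribution function \(\partial F(r,\cdot)/\partial r\). Your closing remarks on the convergence hypothesis of Rosinski's theorem and on the Gaussian part \(\diag(\rho_{1},\rho_{2})\) not being generated by the series touch on points the paper's proof passes over in silence, but they refine rather than alter the argument.
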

\!\!\underline{\textbf{Examples.}} 
\begin{enumerate}
\item
\label{clevy}
Consider the positive Clayton-L{\'e}vy copula 
\begin{eqnarray*}
	F_{C}(u,v) = (u^{-\theta}+ v^{-\theta})^{-1/\theta}
\end{eqnarray*}
with some \(\theta>0\).  First derivative of this function is equal to 
\begin{eqnarray*}
\frac{\partial F_{C}(u,v)}{\partial u} = \frac{(v/u)^{\theta+1}}
{
	\left(
		1+(v/u)^{\theta}
	\right)^{(1+\theta)/\theta}
}.
\end{eqnarray*}
Motivated by this representation, we suggest to define the density function \(p^{*}(z)\) as
\begin{eqnarray*}
	p^{*}(z) = \frac{\partial}{\partial z} \left\{
	\frac{z^{\theta+1}}
{
	\left(
		1+z^{\theta}
	\right)^{(1+\theta)/\theta}
}
\right\}
=
\frac{\partial}{\partial z} \left\{
	\frac{1}
{
	\left(
		z^{-\theta}+1
	\right)^{(1+\theta)/\theta}
}
\right\},
\end{eqnarray*}
and the function \(f^{*}(u,x):= x/u\). Both conditions on the functions \(p^{*}\) and \(f^{*}\) are fulfilled. 
\item  Note that the same arguments can be applied to  any sufficiently smooth homogeneous L{\'e}vy copula, that is, to any copula such that
\begin{eqnarray}
	\label{LC}
	F_{H}(k u, k v) = k F_{H}(u,v), \qquad \forall \;  u,v,k>0,
\end{eqnarray}
see Remark~\ref{cop} about the difference between ordinary copulas and L{\'e}vy  copulas. Note that taking the derivatives with respect to \(u\) from both parts of \eqref{LC}, yields 
\begin{eqnarray*}
	\frac{\partial} {\partial u} F_{H}(k u, k v) = 
	\frac{\partial} {\partial u} F_{H}(u, v) = 
	\left.
	\frac{\partial} {\partial r_{1}} F_{H}(r_{1},r_{2}) 
	\right|_{\substack {r_{1}= 1 \\ r_{2}=v/u}},
\end{eqnarray*}
and therefore one can define 
\begin{eqnarray*}
		p^{*}(z) =  \frac{\partial}{\partial z} \left\{
				\left.
					\frac{\partial} {\partial r_{1}} F_{H}(r_{1},r_{2}) 
					\right|_{\substack {r_{1}= 1 \\ r_{2}=z}}
		\right\}, \qquad f^{*}(u,x):= x/u.
\end{eqnarray*}
For the description of the class of  homogeneous L{\'e}vy copulas we refer to Section~4 from \cite{BNLindner}.
\item
Moreover, we can apply the same approach for any mixtures of homogeneous L{\'e}vy copulas. In fact, consider the function 
\begin{eqnarray*}
 	F_{M}(u,v) = \sum_{r=1}^{n} \beta_{r} F_{r}(u,v), 
\end{eqnarray*}
where \(\beta_{1}, ..., \beta_{n}\) are positive numbers such that \(\sum_{r=1}^{n} \beta_{r}=1\) and \(F_{r}(u,v)\) are homogeneous L{\'e}vy copulas for any \(r=1..n\). It's easy to see that \(F_{M}(u,v)\) is a homogeneous L{\'e}vy copula. %, and moreover it is homogeneous, because 
%\begin{eqnarray*}
%	F_{M}(ku, kv) = \sum_{r=1}^{n} \beta_{r} F_{r}(ku,kv) = 
%	k \left[\sum_{r=1}^{n} \beta_{r} F_{r}(u,v) \right]= k F_{M}(u,v).
%\end{eqnarray*}
 As it was shown in the previous example, we can take \(f^{*}(u,x):= x/u\). Note also that in the case of mixture model,
\begin{eqnarray*}
	p^{*}(z) = \sum_{r=1}^{n} \beta_{r} p_{r}^{*}(z),
\end{eqnarray*}
where by \(p_{r}^{*}(\cdot)\) we denote the density functions such that
\begin{eqnarray*}
	\int_{-\infty}^{x/u} p_{r}^{*}(z) dz = \frac{\partial F_{r}(u,x) }{\partial u}.
\end{eqnarray*}

\end{enumerate}
 \begin{proof} (of Theorem~\ref{thm2})
Since the L{\'e}vy copula \(F\)  is sufficiently smooth, we can differentiate both parts in \eqref{Sklar} and get that 
 \begin{eqnarray*} 
 \nu (B) &=& \int_{\R_{+}^{2}}   \mu\Bigl(B \; ; \diag(y_{1}, y_{2})  \Bigr) \; 
 \left.\frac{\partial^{2} F}{\partial r_{1} \; \partial r_{2}}\right|_{\substack {r_{1}= U_{1}(y_{1}) \\ r_{2}=U_{2}(y_{2})}} \;
 d\left( U_{1}(y_{1})\right)
 d\left( U_{2}(y_{2})\right) \\
 &=& 
 \int_{\R_{+}^{2}}   \mu\Bigl( B \; ;  \diag(U_{1}^{-1} (r_{1}), U_{2}^{-1} (r_{2})) \Bigr) \; 
\frac{\partial^{2} F (r_{1}, r_{2})}{\partial r_{1} \; \partial r_{2}} \; dr_{1} dr_{2},
 \end{eqnarray*}
see Proposition~5.8 from \cite{ContTankov}. Our general aim is to apply the result by Rosinsky \cite{Rosinsky}, which is  nicely formulated as Theorem~6.2 in \cite{ContTankov}. Comparison of this result and the statement of our theorem leads to the idea to find a function \(H: \left(0, +\infty\right) \times S \to \R^{2}\), where \(S\) is a measurable space, such that 
 \begin{eqnarray}
 \label{nuB}
 \nu(B) = \int_{\R_{+}} 
 	\P \Biggl\{  \vec{H}\left(r, \vec{D}\right) \in B \Biggr\}
	dr, \qquad \forall \;B \in \B (\R^{2}),
 \end{eqnarray}
 where \(\vec{D}\) is a random vector from \(S\). 
 
 First note that it is sufficient to consider the sets \(B = B_{1} \times B_{2}\), where \(B_{1} = [x, \infty), \; B_{2} = [y, \infty), \; x,y \in \R\). For such \(B\), 
 \begin{eqnarray*}
\mu\Bigl( B; \;\diag(U_{1}^{-1} (r_{1}), U_{2}^{-1} (r_{2}))\Bigr) = \mu\Bigl( B_{1} \; ; U_{1}^{-1} (r_{1})\Bigr)  \cdot \mu\Bigl( B_{2} \; ; U_{2}^{-1} (r_{2})\Bigr),
 \end{eqnarray*}
 where by \(\mu(\sigma, B)\) we denote the one-dimensional normal distribution with zero mean and variance equal to \(\sigma\) (since there is no risk of confusion, we use the same letter for 2-dimensional and 1-dimensional distributions). Therefore, 
  \begin{eqnarray}
  \label{nu1}
	 \nu(B) 
	  &=&
	   	\int_{\R_{+}} 
		\E_{\L(r_{1})}
		 \Biggl[
			\mu\Bigl( B_{2} \; ; U_{2}^{-1} (\cdot) \Bigr) 
		 \Biggr]
\mu\Bigl(  B_{1} \; ; U_{1}^{-1} (r_{1}) \Bigr) 
	 dr_{1},
 \end{eqnarray} 
 where by 
\begin{eqnarray*}
 	\E_{\L(r_{1})}
		 \Biggl[
			\mu\Bigl(  B_{2}\; ; U_{2}^{-1} (\cdot) \Bigr) 
		 \Biggr]
	&=& 
		\int_{\R_{+}}
		\mu\Bigl(B_{2}  \; ;  U_{2}^{-1} (r_{2})\Bigr) 
		\;
		\frac{\partial}{\partial r_{2}} \left( \frac{\partial F (r_{1}, r_{2})}{\partial r_{1}} \right) \;  dr_{2} 
\end{eqnarray*}
we denote the mathematical expectation with respect to the measure \(\L(r_{1})\) with the distribution function \(\tilde{F}(r_{2}) = \partial F (r_{1}, r_{2}) / \partial r_{1}\) (the statement that \(\tilde{F}(r_{2}) \) is in fact a distribution function is proven in \cite{ContTankov}, Lemma~5.3). By the well-known Fubini theorem, 
\begin{eqnarray*}
 	\E_{\L(r_{1})}
		 \Biggl[
			\mu\Bigl( B_{2} \; ; U_{2}^{-1} (\cdot) \Bigr) 
		 \Biggr]
	&=& 
		\int_{B_{2}} g(v \; ; r_{1}) \; dv,
\end{eqnarray*}
where
\begin{eqnarray}
 \label{g}
 g(\cdot; \; r_{1}) = \; 	\int_{\R_{+}}
			p_{1}\Bigl( \cdot \; ;
				U_{2}^{-1} (r_{2})
			\Bigr)
	 	 	\frac{\partial^{2} F (r_{1}, r_{2})}{\partial r_{1} \partial r_{2} }
			dr_{2}, \qquad r_{1}>0,
\end{eqnarray}
and \(p_{1}\left( \cdot \; ;  U_{2}^{-1} (r_{2})\right) \) is the  density of the normal distribution with zero-mean and variance equal to \(U_{2}^{-1} (r_{2})\), and  \(\partial^{2} F (r_{1}, r_{2}) / (\partial r_{1} \partial r_{2})\) is the  density function corresponding to the distribution function \(\tilde{F}(r_{2})\).

Note that  \(g\) is a density function, see Remark~\ref{remm}. Changing the variables we get 
\begin{eqnarray*}
	g(\cdot; \; r_{1}) =   \int_{\R_{+}}
			p_{1}\Bigl( \cdot \; ;
				 \tilde{r}_{2}
			\Bigr)
			\left. 
		 	 	\frac{\partial^{2} F (r_{1}, r_{2})}{\partial r_{1} \partial r_{2} }			
			\right|_{r_2 = U_{2}\left( \tilde{r}_{2}\right)}
			d \left( 
				 U_{2}\left( \tilde{r}_{2}\right)
			\right), \qquad r_{1}>0.
\end{eqnarray*}
The last expression yields that \(g(\cdot; \; r_{1}) \) is in fact  a variance mixture of the normal distribution (see \cite{BNKS} or \cite{Kelker}). This in particularly gives that the random variable 
\begin{eqnarray*}
	\xi = \eta_{1} \sqrt{\eta_{2}}
\end{eqnarray*}
has a distribution with density \(g(\cdot; r_{1})\), where \(\eta_{1}\) has standard  normal distribution, and \(\eta_{2}\) - distribution with density 
\begin{eqnarray*}
	\breve{p}(\cdot; r_{1})  &=&  - 
	\left. 
		\frac{\partial^{2} F (r_{1}, r_{2})}{\partial r_{1} \partial r_{2} }
	\right|_{r_{2}=U_{2}(\cdot)}
			U'_{2}(\cdot)\\
			&=&
			\left. 
		\frac{\partial^{2} F (r_{1}, r_{2})}{\partial r_{1} \partial r_{2} }
	\right|_{r_{2}=U_{2}(\cdot)}
		\left|	U'_{2}(\cdot) \right|,
\end{eqnarray*}
that is, the density of the random variable \(U_{2}^{-1}(\eta_{3})\), where \(\eta_{3}\) has a distribution function 
\(\tilde{F}(r_{2})\). Since \eqref{fp} holds, we get that 
\begin{eqnarray*}
	\frac{\partial^{2} F (r_{1},r_{2})}{\partial r_{1} \partial r_{2} }
	=
	\frac{ \partial f^{*}(r_{1}, r_{2})} {\partial r_{2}}
	p^{*} ( f^{*}(r_{1}, r_{2})),
\end{eqnarray*}
and therefore \(\eta_{3}\) has the same distribution as \(h^{*}(r_{1}, \eta_{4})\), where \(\eta_{4}\) has distribution with density  \(p^{*}(\cdot)\).

Finally we get the following representation for the L{\'e}vy measure \(\nu\):
\begin{eqnarray*}
\label{nuBplus}
	\nu (B) &=&  
	\int_{\R_{+}} 
	\Biggl[
		\int_{B_{1}}
			p_{1}\Bigl( 
				u \; ; 
				U_{1}^{-1} (r_{1})
			\Bigr)
			du
			\; \cdot \;
		\P \Bigl\{
			\eta_{1} \sqrt{ U_{2}^{-1}(h^{*}(r_{1},\eta_{4}))}
			\in B_{2}
		\Bigr\}	
		\Biggr]	
	 dr_{1}.
\end{eqnarray*}
This representation motivates to define  the function \(\vec{H}\) by 
\begin{eqnarray*}
\vec{H}(r, \vec{D}) =  \left( 
\begin{matrix}
	\sqrt{U_{1}^{-1}(r)} \cdot  D_{1}
	   \\
	   D_{2} \sqrt{ U_{2}^{-1}(h^{*}(r,D_{3}))}
 \end{matrix}
\right),
 \end{eqnarray*}
with  \(\vec{D} = (D_{1}, D_{2}, D_{3})\), where \(D_{1}, D_{2}\) have standard normal distribution, and \(D_{3}\) has a distribution with density function \(p^{*}(\cdot)\).  This observation completes the proof.

\end{proof}
\begin{rem}
\label{cop}
In the context of ordinary copulas, it is common to introduce the homogeneous copula \(C_{H}^{(k)}\) of order \(k\) by
\begin{eqnarray}
\label{CH}
	C_{H}^{(k)} (k u, k v) = k^{\alpha} C_{H}(u,v), \qquad \forall \; k,u,v >0,
\end{eqnarray}
see, e.g., \cite{Nelsen}. Substituting \(u=v=1\), we get \(C_{H}(k,k)=k^{\alpha}\). Therefore, taking into account the Fr{\'e}chet bounds, we arrive at the inequality \[\max(2 k -1,0 ) \leq k^{\alpha} \leq k,\] which yields that \(\alpha \in [1,2]\). Moreover, it turns out that the class of  homogeneous ordinary copulas coincides with   Cuadras-Aug{\'e} family. More precisely, 
\begin{eqnarray*}
		C_{H}^{(k)}(u,v) 
=
		\left(
			\min(u,v)
		\right)^{2-\alpha}
		\left(
			u v
		\right)^{\alpha-1}, \quad u,v \in [0,1],
\end{eqnarray*}
see Theorem~3.4.2 from \cite{Nelsen}.
Returning  to L{\'e}vy copulas, we realize that similar to \eqref{CH} equality
\begin{eqnarray*}
	F_{H} (k u, k v) = k^{\alpha} F_{H}(u,v), \qquad \forall \; k,u,v >0
\end{eqnarray*}
is possible only in case \(\alpha=1\). In fact, taking limit as \(v \to \infty\), we get the equality \(k u = k^{\alpha} u, \; \forall u\), which leads to trivial conclusion \(\alpha=1\). This argument yields the definition of homogeneous L{\'e}vy copula \eqref{LC}.

\end{rem}
 \begin{rem}
 \label{remm}
  Let us shortly show that the function \(g(r_{1} ; \; \cdot)\) defined by \eqref{g} is a density function for any \(r_{1}\). In fact, as it was mentioned before, the function 
 \(\tilde{F}(r_{2}) = \partial F (r_{1}, r_{2}) / \partial r_{1}\) is a distribution function, and moreover \(\partial^{2} F (r_{1}, r_{2}) / \partial r_{1} \partial r_{2} \) is the denisity function of this distribution. Therefore,  \(g( r_{1}; \; \cdot) \geq 0 \), and 
 \begin{eqnarray*}
 \int_{\R} g(r_{1}; \; v ) dv &=& 
  \int_{\R_{+}}
  \Biggl[
  \int_{\R}
			p_{1}\Bigl( 
				U_{2}^{-1} (r_{2}); \; v
			\Bigr)
			dv
\Biggr]
	 	 	\frac{\partial^{2} F (r_{1}, r_{2})}{\partial r_{1} \partial r_{2} }
			dr_{2}\\ 
&=& 
\int_{\R_{+}}
\frac{\partial^{2} F (r_{1}, r_{2})}{\partial r_{1} \partial r_{2} }
			dr_{2} =1.
 \end{eqnarray*}
  \end{rem}
 
 \begin{rem}
 \label{rem54}
It is a worth mentioning that the right way to truncate series in \eqref{res1}-\eqref{res2} is to fix some \(r\) and keep \(N(r) = \inf_{i} \left\{ \Gamma_{i} \geq r \right\}\) terms, see \cite{ContTankov} for details.

 \end{rem} 
 % \section{Reproducing the dependence structure}
%
% 
% Assume that the observations of the process \(X_{t}\) and the time points \(\Delta, 2\Delta, ..., n\Delta\) are given for some fixed \(\Delta>0\) (low-frequency setup, see \cite{panovdiss}). The aim is to construct a two-dimensional subordinator \(\vec{\T}(s)\) such that \eqref{vX} holds with this \(\vec{\T}(s)\) and some independent Brownian motions \(B_{i}(t), \:i=1,2\). 
% 
% The algorithm consists in the following steps: 
% \begin{enumerate}
%\item Estimation of the tail integrals \(U, U_{1}\) and \(U_{2}\) of the processes \(\vec{X}, X_{1}\) and \(X_{2}\) resp. by reconstruction the L{\'e}vy measure from the low-frequency observations, see \cite{DB}.
%\item Estimation of the tail integrals  \(U^{*}, U_{1}^{*}\) and \(U_{2}^{*}\) of the processes \(\vec{\T}, \T_{1}\) and \(\T_{2}\) resp. by reconstrunction the L{\'e}vy triplet of the subordinator from the L{\'e}vy triplet of the subordinated process, see Theorem~\ref{thm1} and \cite{Cherny}.
%\item Estimation of the L{\'e}vy copula \(F^{*}\) of the process \(\T\) by the formula \eqref{Levycopula}. 
%\end{enumerate}
%

\section{Empirical analysis}
In this section, we consider the  following model:
\begin{eqnarray}
\label{xs}
\vec{X}(s):=\left( \tW_{1}(\T_{1}(s)), \tW_{2}(\T_{2}(s)) \right)
\end{eqnarray}
where 
\begin{eqnarray}
\label{xs2}
\tW_{i}(t)=\mu_{i} t+\sigma_{i} W_{i}(t), \qquad i=1,2,
\end{eqnarray}
$W_{1}(t), W_{2}(t)$ are two independent Brownian motions, \(\mu_{1}, \mu_{2} \in \R\), \( \sigma_{1}, \sigma_{2} \in \R_{+}\), and \(\left( \T_{1}(s), \T_{2} (s)\right) $ is a two-dimensional subordinator. The dependence between \(\T_{1}(s)\) and \(\T_{2}(s)\) is described via some L{\'e}vy copula \(F(\cdot, \cdot; \delta)\) which belongs to a  class parametrized by \(\delta \in \R.\) The marginal subordinators \(\T_{1}(s)\) and \(\T_{2}(s)\) belong to some parametric class of L{\'e}vy processes. The corresponding parameters are denoted by \(\theta_{1}\) and \(\theta_{2}\).

In what follows, we will apply this model to the  modeling of stock returns. In this context, \(\vec{X}(s)\) represents the returns of two stocks traded on the Nasdaq, and 
\(\left(\T_{1}(s), \T_{1}(s)\right)\) are cumulative numbers of trades of these stocks. Our approach can be considered as a generalization of the paper \cite{AneGeman}, where the one-dimensional time-changed Brownian motion is used for representing one-dimensional stock returns. In \cite{AneGeman}, it is shown that the cumulative number of trades is a good approximation of business time. More precisely, the authors showed that the theoretical moments of the subordinator almost perfectly coincide with the empirical moments of cumulative number of trades.

Our simulation study consists of three steps.
\begin{enumerate}
\item \textit{Estimation of the L{\'e}vy copula between one-dimensional subordinators.} First, we assume some parametric structure of the L{\'e}vy copula between \(\T_{1}(s)\) and \(\T_{2}(s)\) and estimate the parameters of this structure. Our estimation procedure is motivated by the research \cite{Esm}, \cite{Esm2} and is described below in Section~\ref{st1}.

\item \textit{Estimation of the parameters of the processes \(\tW_{1}(t)\) and \(\tW_{2}(t)\).} On this stage, we apply methodology described in \cite{AneGeman} to the processes   \(\tW_{1}(t)\) and \(\tW_{2}(t)\), and get the estimators of the parameters \(\mu_{1}, \mu_{2},  \sigma_{1}, \sigma_{2}\). This part of the empirical analysis is given in Section~\ref{st2}.

 \item \textit{Applying simulation techniques.} Since we already have the estimates of the L{\'e}vy copula between the subordinators and the parameters \(\mu_{1}, \mu_{2},  \sigma_{1}, \sigma_{2}\), we can apply the main theoretical result of the current paper presented in  Theorem~\ref{thm2}. The algorithm is described in Section~\ref{st3}. Some graphs are given in the appendix.
\end{enumerate}

\subsection{Description of the data}
 We examine 10- and 30-minutes Cisco, Intel and Microsoft prices traded on the Nasdaq over the period  from the 25. August 2014 till the 21. November 2014. For each equity we have time variable, price variable and number of trades. The length of time series is 832 observations for 30- minutes data and 2496 observations for 10- minutes data.
Before the analysis, last observations of each trading day were deleted due to the abnormally small number of trades. We suppose that it is related to the microstructure of the market.  

Some descriptive statistics of the data are given  in  Tables~\ref{t1} and \ref{t2} (see Appendix~\ref{AA}).

\subsection{Step 1. L{\'e}vy copula estimation}
\label{st1}
The techniques for the parametric estimation of L{\'e}vy copula are not well-described in the literature and are mainly known for compound poisson processes (see \cite{Avanzi}, \cite{Esm}, \cite{Esm2}).
 In this research, we also assume that subordinators \(\T_{1}(t)\) and \(\T_{2}(t)\)  are compound poisson processes (CPP) with positive jumps, that is, 
\begin{eqnarray}
\label{cpp}
\T_{1}(t) = \sum_{i=i}^{N_1(t)} X_i,  \qquad
\T_{2}(t) = \sum_{j=1}^{N_2 (t)} Y_j ,
\end{eqnarray}
where $X_i$ and $Y_i$ are i.i.d random variables with densities $f_1(x; \theta_{1})$ and $f_2(x; \theta_{2})$ with support in \(\R_{+}\), $N_1(t)$  and $N_2(t)$ are Poisson processes with intensities \(\lambda_{1}\) and \(\lambda_{2}\) resp.   

At first glance,  \eqref{cpp}  seems to be a strong assumption. However,  the L{\'e}vy processes with truncated jumps, that is processes in the form 
\begin{eqnarray*}
	J_{t} =
	\sum_{\substack{ 0 \le s \le t\\ |\Delta Z_{s}| \ge c} }
	\Delta Z_{s}, \qquad \Delta Z_{s} = Z_{s}-Z_{s-},
\end{eqnarray*}	 
where \(Z\) is a (multi-dimensional) L{\'e}vy process, \(c\) is a positive constant,   are compound poisson processes (see \cite{ContTankov}, \cite{Sato}). Moreover, the  truncation of the jumps is a well-used procedure for different simulation and estimation techniques, and is a quite natural tool in the context of stock data (see \cite{ContTankov}, \cite{Esm2}). 

Two-dimensional CPP could be represented in the following way:
\begin{eqnarray*}
\T_1(t)&=&\T^{\bot}_1(t)+\T^{\|}_1 (t),\\
\T_2(t)&=&\T^{\bot}_2(t)+\T^{\|}_2 (t), 
\end{eqnarray*}
where $\T^{\bot}_1$, $\T^{\bot}_2(t)$, $(\T^{\|}_1 (t), \T^{\|}_2 (t))$ are independent compound Poisson processes, and moreover
\begin{itemize}
	\item the L{\'e}vy measure of the process \(\T^{\bot}_1(t)\) is equal to \(\eta(\cdot, 0)\), that is, the component \(\T^{\bot}_1(t)\)  represents the jumps of the process \(\T_1(t)\) which occur independently of the process \(\T_2(t)\);
	\item analogously, the L{\'e}vy measure of the process \(\T^{\bot}_2(t)\) is equal to \(\eta(0, \cdot)\), that is, the component \(\T^{\bot}_2(t)\)  represents the jumps of the process \(\T_2(t)\) which occur independently of the process \(\T_1(t)\);
	\item the L{\'e}vy measure of the joint process \(\left(\T^{\|}_1 (t), \T^{\|}_2 (t)\right)\) is equal to \(\eta(B)\) for all sets \(B\) such that \((0,y)\) and \((x,0)\) doesn't belong to \(B\) for any real \(x,y\), that is,  \(\left(\T^{\|}_1 (t), \T^{\|}_2 (t)\right)\) represents the simultaneous jumps of the processes \(\T_1(t)\) and  \(\T_2(t)\).
\end{itemize}
Shortly speaking, we decompose the 2-dimensional CPP into the  jump independent parts $ \left( \T^{\bot}_1, \T^{\bot}_2 \right)$ and jump dependent parts $(\T^{\|}_1,\T^{\|}_2)$. In other words,  first part represents positive jumps only in one coordinate and second part represents positive jumps in both coordinates.  We denote
by \(\Pi^{\bot}_1, \Pi^{\bot}_2, \Pi^{\|} \)  the L{\'e}vy measures of the processes $\T^{\bot}_1, \T^{\bot}_2, (\T^{\|}_1,\T^{\|}_2)$ resp., by \(\lambda^{\bot}_1, \lambda^{\bot}_2, \lambda^{\|} \) - the intensities of the corresponding processes, and by $n^{\bot}_1, n^{\bot}_2, n^{\|}$ - the total number of jumps occuring only in the observed processes up to some fixed time \(T\). 

 The characterisitc function of two-dimensional CPP can be decomposed as follows (see \cite{ContTankov}):
\begin{multline*}
E[\exp(iz_1 \T_1(t)+iz_2 \T_2(t))] =\\ 
 \exp \left\{ t \int_{R}^{} (\exp (iz_1 x)-1) \Pi^{\bot}_1 (dx) +
t \int_{R}^{} (\exp (iz_2 y)-1) \Pi^{\bot}_2(dy) +\right.\\
\left.
+t \int_{R^{2}}^{} (\exp(iz_1x+iz_2y)-1) \Pi^{\|}(dx \times dy) \right\} =\\
E\left[ \exp(iz_1 \T_1^{\bot}(t)) \right] E\left[\exp(iz_2 \T_2^{\bot}(t))  \right] E\left[\exp(iz_1 \T_1^{\|}(t)+iz_2 \T_2^{\|}(t))\right].
\end{multline*}

In \cite{Esm},  Esmaeili and Kluppelberg applied the MLE approach to estimate the parameters in this model.  According to the Theorem~4.1 from \cite{Esm}, the likelihood of the bivariate compound poisson process is given by: 
\begin{eqnarray}
\label{likelihood}
 L(\lambda_1,\lambda_2,\theta_1,\theta_2,\delta)= I_{1} \cdot I_{2} \cdot I_{3}
\end{eqnarray}
 where
 \begin{eqnarray*}
I_{1} &=& (\lambda_1)^{n^{\bot}_1} \exp(-\lambda^{\bot}_1 T) \\&& \hspace{2cm} \cdot\prod_{i=1}^{n^{\bot}_1} \left[ f_1(\overset{\thicksim}{x_i},\theta_1) \left( 1- \frac{\partial }{\partial u} F(u,\lambda_2;\delta) |_{u=\lambda_1 
\bar{F}_1 (\overset{\thicksim}{x_i};\theta_1)
} \right) \right],\\
I_{2} &=&  (\lambda_2)^{n^{\bot}_2} \exp(-\lambda^{\bot}_2 T) \\
&& \hspace{2cm} \cdot\prod_{i=1}^{n^{\bot}_2} \left[ f_2(\overset{\thicksim}{y_i},\theta_2) \left( 1- \frac{\partial }{\partial v} F(\lambda_1,v;\delta) |_{v=\lambda_2 
\bar{F}_2 (\overset{\thicksim}{y_i};\theta_2)
} \right) \right],\\
I_{3} &=& (\lambda_1 \lambda_2)^{n^{\|}} \exp(-\lambda^{n^{\|}}T) \\&& \hspace{2cm}\cdot \prod_{i=1}^{n^{\|}} 
\left[ f_1 (x_i;\theta_1) f_2(y_i;\theta_2) % \right. \\&& \hspace{2cm} \left.
\cdot
\frac{\partial^{2}}{\partial u \partial v} F(u,v,\delta)|_{\substack{u=\lambda_1 
\bar{F}_1 (x_i;\theta_1)
,\\v=\lambda_2 \bar{F}_2 (y_i;\theta_2)}} \right].
\end{eqnarray*}
where $\overset{\thicksim}{x_i}$ and $\overset{\thicksim}{y_i}$ are jumps in the first and the second component occuring at different time, $x$ and $y$ are jumps for both components occuring at the same time, \( \bar{F}_{i}(\cdot; \theta_{i}) := \int_{\cdot}^{+\infty} f_i (u ;\theta_i) du, \; i=1,2.\)

Formula \eqref{likelihood} is based on a couple of simple facts, which directly follows from our construction. First, note that distance between jump moments for both components are independent and identically distributed exponential random variables with parameters $\lambda_1$ and $\lambda_2$. Second, one can show that
\begin{eqnarray}
\label{tails}
U_i=:U^{\bot}_i+U^{\|}_i,
\end{eqnarray}
where $U_i, \; i=1,2$ - marginal tail integral, $U^{\bot}_i$ and $U^{\|}_i$ are one-dimensional tail integrals of independent and dependent parts. Third, it is a worth mentioning that
\begin{eqnarray}
\label{lambda}
\lambda^{\|}=C(\lambda_1,\lambda_2;\delta).
\end{eqnarray}
From (\ref{tails}) and properties of two-dimensional tail integral and L{\'e}vy copula we get that for any positive \(x,y\)
\begin{eqnarray*}
\lambda^{\bot}_1 (x) (1-F^{\|}_1(x)) &=& \lambda_1 \bar{F}_{1}-\textit{C}(\lambda_1 \bar{F}_{1}(x),\lambda_2;\delta))\\
\lambda^{\bot}_2 (x) (1-F^{\|}_2(x)) &=& \lambda_2\bar{F}_{2}(x) -\textit{C}(\lambda_1,\lambda_2 \bar{F}_{2}(x);\delta))\\
\lambda^{\|} F(x,y) &=&\textit{C}(\lambda_1 \bar{F}_{1}(x),\lambda_2 \bar{F}_{2}(y);\delta).
\end{eqnarray*}
For numerical example, we model the process of two-dimensional cumulative number of trades as a compond poisson process with exponential jumps. Denote by $\theta_1$ and $\theta_2$ the parameters of  the jump sizes densities, that is, \[f_i(x;\theta_i)=\theta_i \exp(-\theta_i x) \quad\mbox{for} \quad x>0, \quad i=1,2\] is the jump density for the \(i\)-th component. The dependence between \(\T_{1}\) and  \(\T_{1}\) is described by a Clayton  L{\'e}vy copula with  parameter $\delta$ . Clayton copula is homogeneous copula and perfectly fits the conditions of the Theorem 5.1 (see Example 1 on page~\pageref{clevy}).

The likelihood function of the continuously observed two-dimensional CPP process $(\T_1(t), \T_2(t))$ can be written in the following form, assuming that jumps occur at each moment for both components (there are no time intervals without trades in our data due to the fact that Cisco, Intel and Microsoft are liquid securities):
\begin{multline}
\label{L}
L(\lambda_1,\lambda_2,\theta_1,\theta_2,\delta) \\= \left( (1+\delta) \theta_1 \theta_2 (\lambda_1 \lambda_2)^{\delta+1} \right)^{n} 
 \exp\left\{-\lambda^{\|}T -(1+\delta) \left( \theta_1 \sum_{i=1}^{n} x_i +\theta_2 \sum_{i=1}^{n} y_i\right) 
 \right\} \\
\cdot\prod_{i=1}^{n} \left( \lambda^{\delta}_1 \exp(-\theta_1 \delta x_i)+\lambda_2^{\delta} \exp(-\theta_2 \delta y_i) \right)^{-\frac{1}{\delta}-2}.
\end{multline}
The results of the  numerical optimization of \(\ln L(\lambda_1,\lambda_2,\theta_1,\theta_2,\delta)\) are presented in the  Table~\ref{t3}.

\vspace{1cm}

% Table generated by Excel2LaTeX from sheet '\UTF{2013}\UTF{00F5}\UTF{2013}\UTF{220F}Ñ\UTF{00C5}Ñ\UTF{00C7}1'
\begin{table}
\caption{}\label{t3}
\begin{tabular}{|r|r|r|r|r|r|r|}
\hline
                                             \multicolumn{ 7}{|c|}{Estimated parameters} \\
\hline
      Pair &     $\theta_1$ &     $\theta_2$ &      $\delta$ &    $\lambda_1$ &    $\lambda_2$ & Log likelihood value \\
\hline
                                               \multicolumn{ 7}{|c|}{30-minutes returns} \\
\hline
Csco vs Int &       0,29 &       0,14 &       2,21 &      24,91 &      14,69 &    5161,43 \\
\hline
Csco vs Msf &       0,23 &       0,14 &       2,71 &      16,39 &      17,21 &    5196,93 \\
\hline
Int vs Msf &       0,14 &       0,17 &       2,38 &      14,41 &      24,68 &    5579,32 \\
\hline
                                               \multicolumn{ 7}{|c|}{10-minutes returns} \\
\hline
Csco vs Int &       0,85 &       0,43 &       1,76 &      74,18 &      48,60 &   10299,11 \\
\hline
Csco vs Msf &       0,71 &       0,42 &       2,11 &      52,66 &      55,78 &   10406,96 \\
\hline
Int vs Msf &       0,42 &       0,49 &       2,00 &      46,22 &      72,48 &   11511,90 \\
\hline
\end{tabular}  
\end{table}

\subsection{Step 2. Estimation in time-changed model}
\label{st2}
 Let us shortly recall the model of stochastic time change  in one-dimensional case. Denote by $P_t$ a equity price at moment t, and the returns by 
\begin{eqnarray}
Y_t=\log\left(\frac{P_t}{P_{t-1}}\right).
\end{eqnarray}
The  main  idea of the pioneer research \cite{AneGeman} is to show that  
\begin{eqnarray}
\label{xtau}
Y_{t}=\tW(\tau(t)),
\end{eqnarray}
where $\tW(t)=\mu t+\sigma W(t)$ with Brownian motion \(W_{t}\), and $\tau(t)$ is the cumulative number of trades up to time \(t\). It is a well- known fact that on one side, financial returns generally are not normally distributed (e.g. they are fat tailed), and on the other side,  normality hypothesis is very convenient tool in finance, e.g. in mean-variance paradigm. In this context, formula \eqref{xtau} shows that returns are in some sense normal in business time, which differs from calendar time.

Returning to our model \eqref{xs}-\eqref{xs2}, we now consider the problem of statistical estimation of the 
 parameters \(\mu_{1}, \mu_{2},  \sigma_{1}, \sigma_{2}\). This task can separately solved for both components of the vector \(\vX(s)\) by the method of moments. Assuming that \(\tau\) is a CPP with intensity \(\lambda \) and  jumps distributed by exponential law with parameter \(\theta\), we get 
\begin{eqnarray*}
E[Y_t]=\mu \frac{\lambda t}{\theta}, \qquad 
\Var[Y_t]=\frac{\sigma^{2} \lambda t}{\theta}+\frac{2 \mu^{2}  \lambda t}{\theta^{2}}.
\end{eqnarray*}
Solving the system of equations
\begin{eqnarray*}
E[Y_t] = \widehat{E[Y_t]}, \qquad  \Var[Y_t] = \widehat{\Var[Y_t]},
\end{eqnarray*}
we arrive at the following estimates of the parameters $\mu$ and $\sigma^{2}$:
\begin{eqnarray*}
\hat{\mu}=\frac{\theta \widehat{E[Y_t]}}{\lambda t}, \qquad
\hat{\sigma^{2}}=\frac{\widehat{\Var[Y_t]}-2 \hat{\mu}^{2} \lambda t/ \theta}{\lambda t }.
\end{eqnarray*}
Estimation results are presented in the  Table~\ref{t3}.

\begin{table}
\caption{}\label{t4}
% Table generated by Excel2LaTeX from sheet '\UTF{2013}\UTF{00F5}\UTF{2013}\UTF{220F}Ñ\UTF{00C5}Ñ\UTF{00C7}1'
\begin{tabular}{|r|r|r|r|r|}
\hline
      Pair &    $\mu_1$ &     $\mu_2$ & $\sigma^{2}_1$ &    $\sigma^{2}_2$ \\
\hline
                        \multicolumn{ 5}{|c|}{30-minutes data} \\
\hline
 csco intc &   1,94E-08 &   7,25E-09 &   3,45E-09 &   4,93E-09 \\
\hline
 csco msft &   2,31E-08 &   1,16E-08 &   4,10E-09 &   3,05E-09 \\
\hline
 intc msft &   7,33E-09 &   1,00E-08 &   4,99E-09 &   3,84E-10 \\
\hline
                        \multicolumn{ 5}{|c|}{10-minutes data} \\
\hline
 csco intc &  -6,00E-09 &  -7,64E-10 &   5,17E-10 &   9,56E-10 \\
\hline
 csco msft &  -7,05E-09 &  -2,63E-09 &   6,07E-10 &   4,14E-10 \\
\hline
 intc msft &  -7,85E-10 &  -2,35E-09 &   9,82E-10 &   3,69E-10 \\
\hline
\end{tabular}  
 \end{table}

\subsection{Step 3. Simulation techniques}
\label{st3}
Here we show the performance of our approach introduced in Theorem~\ref{thm2}. Our goal is  to simulate two-dimensional time-changed L{\'e}vy process:
\begin{eqnarray*}
\vec{X}(s) &=&\left( \tW_{1}(\T_{1}(s)), \tW_{2}(\T_{2}(s)) \right)\\
&=&
\Bigl(
	\mu_{1} \T_{1}(s) +\sigma_{1} W_{1}(\T_{1}(s)),
	\;\;
	 \mu_{2} \T_{2}(s) +\sigma_{2} W_{2}(\T_{2}(s))
\Bigr).
\end{eqnarray*}
%For practical reasons, series in \eqref{res1} and \eqref{res2} should be truncated as it was already explained in Remark~\ref{rem54}. 
Our simulation algorithm consists of the following steps:
\begin{enumerate}
\item Model an independent sequence of jump times of a standard Poisson process $\Gamma_i$: 
$$
\Gamma_i=\sum_{j=1}^{i} T_j<r,
$$
where $r$ determines the truncation level, $T_j$ is a standard exponential random variable.

\item Model $k$ independent  standard normal random variables $G_i^{(1)}$ and $G_i^{(2)}$, where $i=1,\dots ,k $.
\item Model $k$ independent uniform random variables $R_i$ on $[0,1]$, where $i=1,\dots ,k$.
\item Model $k$ independent random variables $G^{(3)}_i$ with distribution function $F(z),$ which is equal to 
\begin{eqnarray}
F(z)=\frac{1}{(z^{-\theta}+1)^{\frac{(1+\theta)}{\theta}}}
\end{eqnarray}
 by the method of inverse function, that is $G^{(3)}_i \equiv F^{-1}(\xi_i)$, where $\xi_i$ are  independent uniform random variables on $[0,1]$.

\item Model subordinated Brownian motions by (truncated) series representation:
	\begin{eqnarray*}
		Z_{1} (s) &:=&  \sum_{i=1}^{k} \sqrt{U_{1}^{-1}(\Gamma_{i})} \cdot  G^{(1)}_{i} \cdot I\left\{ R_{i} \leq s \right\},\\ 
		Z_{2} (s)  &:=&  \sum_{i=1}^{k}   G^{(2)}_{i}  \sqrt{ U_{2}^{-1}\left(
			h^{*}(\Gamma_{i},G^{(3)}_{i} )
		\right)}  \cdot  I\left\{ R_{i} \leq s \right\},
	\end{eqnarray*}		
where the generalized inverse functions of \(U_{i}, \; i=1,2\) have the form
\begin{eqnarray}
{U_i}^{(-1)}(x)=
\left\{
\begin{aligned}
-\frac{1}{\theta_i} \log (\frac{x}{\lambda_i}), & \quad \text{for}\;\;  x \leq \lambda_i,\\
0, & \quad \text{for} \;\;x >\lambda_i,
\end{aligned}
\right.
\end{eqnarray}
and $h^{*}(x,y)$ is equal to 
\(
h^{*}(x,y)=x y 
\).
\item Model two-dimensional subordinator $(\T_1(s),\T_2(s))$ with Clayton-L{\'e}vy copula and compound poisson margins (with exponential jumps) by series representation (see \cite{ContTankov}, algorithm 6.13).
\item Resulting trajectory is a linear transform of subordinator and subordinated brownian motion:
\begin{eqnarray}
\label{resulting}
X_1(s)=\hat{\mu_1} \T_{1}(s)+\hat{\sigma^{2}_1} Z_1(s)\\
\label{resulting2}
X_2(s)=\hat{\mu_2} \T_{2}(s) +\hat{\sigma^{2}_2} Z_2(s)
\end{eqnarray}
\end{enumerate}

Typical trajectories of simulated processes are presented in the Appendix. Figures~\ref{30MIN51} and \ref{10MIN51} display trajectories for time-changed brownian motions modeled by Theorem~\ref{thm2} for 30 and 10 minutes data. Figures \ref{30MIN51sub} and \ref{10MIN51sub} show typical trajectories for suborninators modelled as compound poisson process with exponential jumps for 30 and 10 minutes data. Finally, Figures \ref{30MINY} and \ref{10MINY} display resulting trajectories for the two-dimensional process $\vX$ calculated by \eqref{resulting}-\eqref{resulting2}.

\subsection{Further research}
One interesting question, which was not addressed before, is to compare the L{\'e}vy copula between simulated process $\left( \tW_{1}(\T_{1}(s)), \tW_{2}(\T_{2}(s)) \right)$ and copula between subordinators $\left( \T_{1}(s), \T_{2}(s) \right)$. This question is motivated by the paper \cite{Eberlein}, where some relations between the corresponding correlation coefficients are given.

In this paper, we would like to visually compare the copulas. The nonparametric estimation of the L{\'e}vy copula between L{\'e}vy processes \(X^{(1)}\) and \(X^{(2)}\) has been recently studied in \cite{BV}. The proposed estimator for any $x_1\ge 0$ and $x_2 \ge 0$ is equal to 
\begin{eqnarray}
\label{Copula_est}
\hat{F}(x_1,x_2)=\sum_{k=1}^{n} I\{\widehat{U}_{1,n}(\Delta^n_kX^{(1)})\leq x_1 ,\U_{2,n}(\Delta^n_kX^{(2)}) \leq x_2\},
\end{eqnarray}
where by      $$\Delta^n_kX^{(i)}=X_{k\Delta_n}^{(i)}-X_{(k-1)\Delta_n}^{(i)}, \quad k=1..n, \; i=1,2,$$
we denote the increments of the processes \(X^{(1)}\) and \(X^{(2)}\), and
\begin{eqnarray}
\label{OnedTail_est}
\U_{n,i}(x)=\frac{1}{n\Delta_n}\sum_{k=1}^n I \{ \Delta^n_kX^{(1)}\geq x \}, \quad i=1,2,
\end{eqnarray}
are the non-parametric estimators of the tail integrals of the underlined L{\'e}vy processes.

We applied this methodology to the simulated process $\tW_{1}(\T_{1}(s))$ and \(\tW_{2}(\T_{2}(s))\)  and got the L{\'e}vy-copula estimate (see Figure~\ref{plotts}). 

\bibliographystyle{imsart-nameyear}
\bibliography{Panov_bibliography}
\appendix
\section{Descriptive statistics}
\label{AA}
\begin{landscape}
\begin{table}
\caption{} \label{t1}
\begin{center}
% Table generated by Excel2LaTeX from sheet '\UTF{2013}\UTF{00F5}\UTF{2013}\UTF{220F}Ñ\UTF{00C5}Ñ\UTF{00C7}1'
\begin{tabular}{|r|r|r|r|r|r|r|r|r|}
\hline
                 \multicolumn{ 9}{|c}{Descriptive statistics for number of trades, in thousands}  \\
\hline
Variable &     mean &    stdev &      min &      max &   median &    $m_2$ &   $m_3$ &   $m_4$    \\
\hline
  30csco &     5,13 &     3,12 &     1,19 &    33,41 &     4,24 &     9,74 &    74,52 &  1357,62  \\
\hline
   30int &     7,83 &     6,22 &     2,09 &    79,24 &     6,02 &    38,64 &  1093,33 & 57900,47  \\
\hline
   30msf &     8,87 &     5,53 &     1,46 &    59,85 &     7,53 &    30,54 &   539,90 & 20579,57  \\
\hline
                                                                          \multicolumn{ 9}{|c}{}  \\
\hline
  10csco &     1,71 &     1,20 &     0,26 &    13,01 &     1,41 &     1,44 &     5,11 &    36,57  \\
\hline
   10int &     2,61 &     2,32 &     0,52 &    39,43 &     2,00 &     5,37 &    69,65 &  1759,46  \\
\hline
   10msf &     2,96 &     2,11 &     0,35 &    39,80 &     2,43 &     4,46 &    42,90 &  1022,25 \\
\hline
\end{tabular}
\end{center}  
\end{table}
\vspace{2cm}
\begin{table}
\caption{}\label{t2}
\begin{center}
% Table generated by Excel2LaTeX from sheet '\UTF{2013}\UTF{00F5}\UTF{2013}\UTF{220F}Ñ\UTF{00C5}Ñ\UTF{00C7}1'
\begin{tabular}{|r|r|r|r|r|r|r|r|r|}
\hline
                                                         \multicolumn{ 9}{|c|}{Descriptive statistics for returns} \\
\hline
   Returns &       mean &      stdev &        min &        max &     median &         $m_2$ &         $m_3$ &         $m_4$ \\
\hline
    30csco &   1,06E-04 &   4,35E-03 &  -4,81E-02 &   3,01E-02 &  -1,12E-04 &   1,89E-05 &  -6,04E-08 &   1,25E-08 \\
\hline
     30int &   5,00E-05 &   5,84E-03 &  -6,81E-02 &   4,16E-02 &   0,00E+00 &   3,40E-05 &  -2,59E-07 &   4,42E-08 \\
\hline
     30msf &   9,34E-05 &   4,97E-03 &  -5,65E-02 &   3,86E-02 &   0,00E+00 &   2,46E-05 &  -1,39E-07 &   2,67E-08 \\
\hline
                                                                                           \multicolumn{ 9}{|c|}{} \\
\hline
    10csco &  -3,36E-05 &   1,70E-03 &  -1,17E-02 &   2,26E-02 &   0,00E+00 &   2,90E-06 &   7,01E-09 &   2,70E-10 \\
\hline
     10int &  -5,50E-06 &   2,62E-03 &  -3,17E-02 &   5,56E-02 &   0,00E+00 &   6,87E-06 &   5,94E-08 &   4,86E-09 \\
\hline
     10msf &  -2,22E-05 &   1,87E-03 &  -2,00E-02 &   3,09E-02 &  -5,50E-06 &   3,49E-06 &   8,84E-09 &   5,28E-10 \\
\hline
\end{tabular}
\end{center}
\end{table}  
\end{landscape}

\section{Graphs}

\begin{figure}[bh]
\noindent
\centering
\includegraphics[width=140mm]{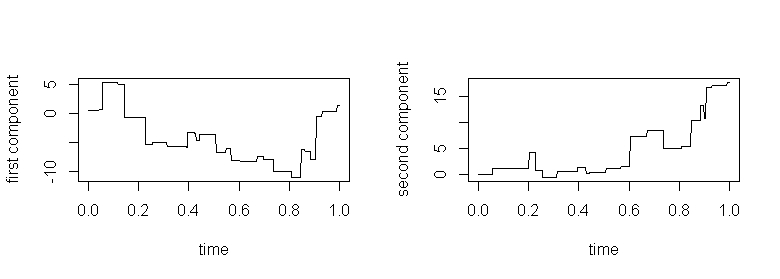}
\caption{Time-changed brownian motion. Subordinators are CPP with exponential jumps. Parameters are estimated from the Cisco and Intel 30-minutes data}
\label{30MIN51}
\end{figure}

\begin{figure}[bh]
\noindent
\centering
\includegraphics[width=140mm]{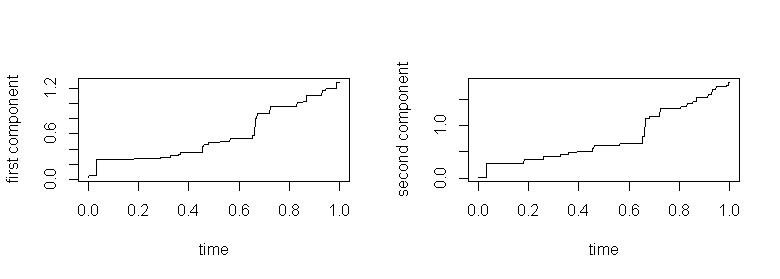}
\caption{Subordinators for 30 minute data. Parameters are estimated from the Cisco and intel 30-minute data}
\label{30MIN51sub}
\end{figure}

\begin{figure}[bh]
\noindent
\centering
\includegraphics[width=140mm]{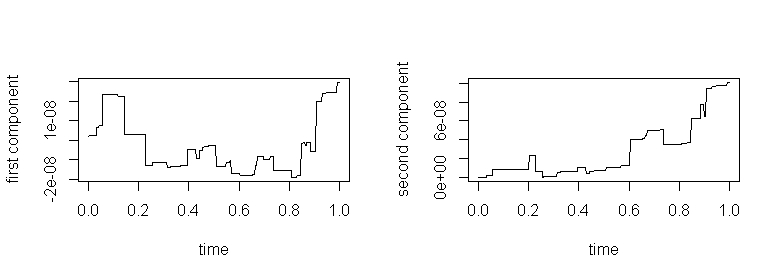}
\caption{Resulting trajectory of process $Y(t)$ for 30 minute data}
\label{30MINY}
\end{figure}

\begin{figure}[bh]
\noindent
\centering
\includegraphics[width=140mm]{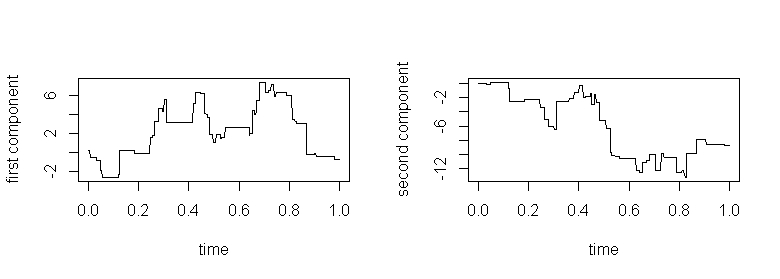}
\caption{Time-changed brownian motion. Subordinators are CPP with exponential jumps. Parameters are estimated from the Cisco and Microsoft 10-minutes data}
\label{10MIN51}
\end{figure}

\begin{figure}[bh]
\noindent
\centering
\includegraphics[width=140mm]{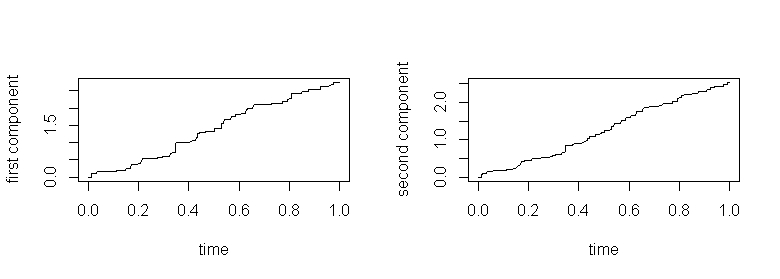}
\caption{Subordinators for 10-minute data. Parameters are estimated from Cisco and Microsoft 10-minute data}
\label{10MIN51sub}
\end{figure}

\begin{figure}[bh]
\noindent
\centering
\includegraphics[width=140mm]{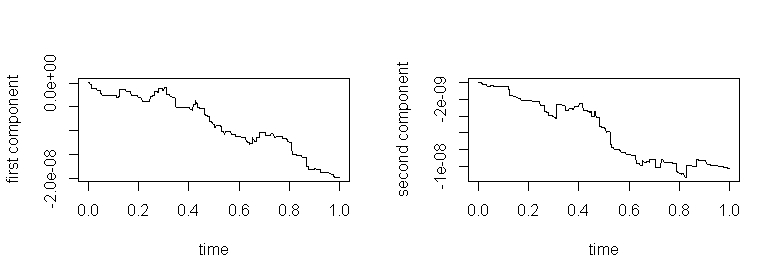}
\caption{Resulting trajectory of process $Y(s)$ for 10- minute data}
\label{10MINY}
\end{figure}

\begin{figure}[bh]
\noindent
\centering
\includegraphics[width=140mm]{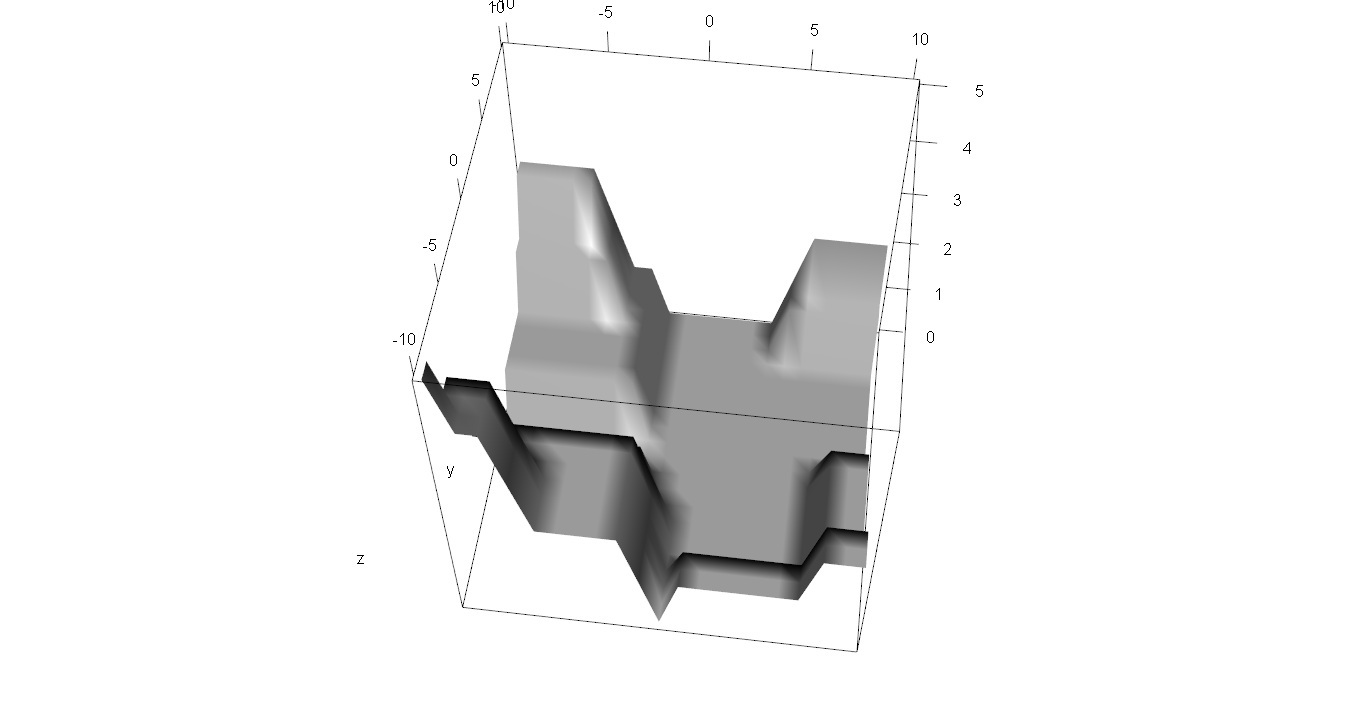}
\caption{Nonparametric estimation of the copula for simulated process $Y(s)$}
\label{plotts}
\end{figure}

\end{document}